\documentclass[11pt]{amsart}
                                                                                                                                                                                                                                                                                                                               \usepackage{amssymb,amsthm,amsmath,mathrsfs}
\usepackage{enumerate}
\usepackage{fancyvrb,fancyhdr,geometry}
\makeatletter
\@namedef{subjclassname@2020}{%
  \textup{2020} Mathematics Subject Classification}
\makeatother
\newtheorem{lemma}{Lemma}[section]
\newtheorem{theorem}{Theorem}[section]

\newtheorem{definition}{Definition}[section]
\newtheorem{remark}{Remark}[section]
\newtheorem{example}{Example}[section]
\DeclareMathOperator{\spa}{span}
\begin{document}

\title{generalized norm retrieval and generalized phase retrieval  in Hilbert spaces}

\author[Gh. Rahimlou]{Gholamreza Rahimlou$^{*}$}%
\address{Department of Basic Sciences, Technical and Vocational University (TVU), Tehran, Iran.}
\email{grahimlou@gmail.com}
\email{ghrahimlo@tvu.ac.ir}

\author[V. Sadri]{Vahid Sadri}
\address{Department of Basic Sciences, Technical and Vocational University (TVU), Tehran, Iran.}
\email{vsadri@tvu.ac.ir}

%\date{}
\begin{abstract}
In this paper, we introduce generalized phase retrieval (briefly, g-phase retrieval) and generalized norm retrieval (briefly, g-norm retrieval). Then, we  present some properties of these concepts in Hilbert spaces, with special emphasis on $\Bbb R^{n}$. Finally, the stability of g-norm retrieval for $\Bbb R^{n}$ is proved.
\end{abstract}

\subjclass[2020]{46C05,46C20,47A30}

\keywords{Frame, g-frames, dual g-frame, norm retrieval, phase retrieval
*Corresponding Author.}

\maketitle

\section{Introduction}
The concept of frames in a separable Hilbert space was originally introduced by Duffin and Schaeffer in the context of non-harmonic Fourier series \cite{ds}. Frames are redundant sets of vectors that provide a stable but non-unique representation for each vector in the space  \cite{ev,ch,naf,Rahim,Rahim1,sun,sad1}. Generalized frames (g-frames) and g-Riesz bases were introduced by Sun \cite{sun} and further developed in subsequent works.

Phase retrieval and norm retrieval are among the most actively studied topics in applied harmonic analysis. Phase retrieval for Hilbert space frames was introduced in \cite{balan} and has since become a significant area of research\cite{Casaza,Wang}. In general, phase retrieval aims to recover a signal (up to a unimodular constant, e.g., a sign in the real case or a complex phase in the complex case) from the magnitudes of its measurements obtained through a redundant linear system. Norm retrieval, on the other hand, focuses on recovering only the norm of the signal from such measurements.

Although much work has been devoted to complex infinite-dimensional phase retrieval, only a few papers have addressed infinite-dimensional real phase retrieval or norm retrieval. In this paper, we introduce two new generalizations: \emph{generalized phase retrieval} (g-phase retrieval) and \emph{generalized norm retrieval} (g-norm retrieval). These notions extend the classical framework by replacing vectors with bounded linear operators between Hilbert spaces, thus providing a unified approach that encompasses both standard frames and g-frames.

We present several properties of g-phase retrieval and g-norm retrieval in general Hilbert spaces, with a particular focus on the finite-dimensional Euclidean space $\mathbb{R}^n$. Among the main contributions, we establish the stability of g-norm retrieval for $\mathbb{R}^n$. More precisely, we show that if a family of operators performs g-norm retrieval up to a small error, then it is possible to construct a nearby family that performs exact g-norm retrieval. This stability result is of practical importance for applications such as signal processing and phase retrieval with noisy measurements.

Throughout this paper, $H$ and $K$ denote separable Hilbert spaces, and $\mathcal{B}(H,K)$ is the set of all bounded linear operators from $H$ to $K$. When $K=H$, we write $\mathcal{B}(H)$ instead of $\mathcal{B}(H,H)$. Let $\mathrm{GL}(H)$ be the set of all $U\in\mathcal{B}(H)$ that have a bounded inverse. For $U\in\mathcal{B}(H,K)$, $\mathcal{R}(U)$ stands for the range of $U$. Given a closed subspace $W$ of $H$, $P_W$ denotes the orthogonal projection onto $W$. Finally, $\{H_i\}_{i\in I}$ is a sequence of Hilbert spaces indexed by a countable set $\mathbb{I}\subset\mathbb{Z}$.

\begin{definition}[\textbf{frame}]
Let $\{f_i\}_{i\in\Bbb I}$ be a sequence of members of $H$. We say that $\{f_i\}_{i\in\Bbb I}$ is a frame  for $H$ if there exist $0<A\leq B<\infty$ such that for each $f\in H$,
\begin{align}\label{ord}
A\Vert f\Vert^2\leq\sum_{i\in\Bbb I}\vert\langle f,f_i\rangle\vert^2\leq B\Vert f\Vert^2.
\end{align}
\end{definition}
The constants $A$ and $B$ are called frame bounds. If the right hand of \eqref{ord} holds, we say that $\{f_i\}_{i\in\Bbb I}$ is a Bessel sequence with bound $B$.
We say that $\{f_i\}_{i\in\Bbb I}$ is a $A$-tight frame (or just tight frame) for $H$, if we have
$$\sum_{i\in\Bbb I}\vert\langle f,f_i\rangle\vert^2=A\Vert f\Vert^2,$$
for each $f\in H$. If $A=1$, the set $\{f_i\}_{i\in\Bbb I}$ is called a Parseval frame.
\begin{definition}[\textbf{g-frame}]
A family $\Lambda:=\lbrace \Lambda_i\in\mathcal{B}(H,H_i)\rbrace_{i\in\Bbb I}$ is called a g-frame for $H$ with respect to $\lbrace H_i\rbrace_{i\in\Bbb I}$,  if there exist $0<A\leq B<\infty$ such that
\begin{equation} \label{1}
A\Vert x\Vert^2\leq\sum_{i\in\Bbb I}\Vert \Lambda_{i}x\Vert^2\leq B\Vert x\Vert^2, \ \  x\in H.
\end{equation}
\end{definition}

Throughout this paper, let $\Lambda = \{\Lambda_i \in \mathcal{B}(H, H_i) : i \in I\}$. 
The constants $A$ and $B$ are called the lower and upper g-frame bounds for $\Lambda$, respectively. 
We say that $\Lambda$ is a g-Bessel sequence with bound $B$ if only the right-hand side inequality of \eqref{1} holds. 
$\Lambda$ is called a tight g-frame if $A = B$.Also, $\Lambda$ is called a Parseval g-frame if $A=B =1$.

If $\Lambda$ is a g-Bessel sequence, then the synthesis and analysis operators are defined by (for more details, we refer to \cite{sun,naf,sad1})
\begin{align*}
T_{\Lambda}: &(\sum_{i\in\Bbb I}\oplus H_i)_{\ell^{2}} \rightarrow H , \qquad T_{\Lambda}^{*}: H \rightarrow (\sum_{i\in\Bbb I}\oplus H_i)_{\ell^{2}},\\
T_{\Lambda}(\lbrace x_i&\rbrace_{i\in\Bbb I})=\sum_{i\in\Bbb I}\Lambda_{i}^{\ast}(x_i) , \qquad T_{\Lambda}^{\ast}(x)=\lbrace \Lambda_{i}x\rbrace_{i\in\Bbb I},
\end{align*}
where, the notation  $(\sum_{i\in\Bbb I}\oplus H_i)_{\ell^{2}}$ will indicate the space
\begin{align*}
\big\lbrace\lbrace x_i\rbrace_{i\in\Bbb I} \ \vert \ x_i\in H_i \ , \ \sum_{i\in\Bbb I}\Vert x_i\Vert^2<\infty \big\rbrace.
\end{align*}
It is a Hilbert space with pointwise operations, and inner product defined by
$$ \langle \{x_i\}, \{y_i\} \rangle :=\sum_{i\in\Bbb I} \langle x_i, y_i \rangle,  \ \ \{x_i\}, \{y_i\} \in (\sum_{i\in\Bbb I}\oplus H_i)_{\ell^{2}}. $$
Now, the g-frame operator is given by
$$S_{\Lambda}x=T_{\Lambda}T^*_{\Lambda}x=\sum_{i\in\Bbb I}\Lambda^{\ast}_{i}\Lambda_{i}x, \ \ x\in H.$$
So, the g-frame operator is positive, self-adjoint and invertible. So, we can get
\begin{align}\label{f1}
A I\leq S_{\Lambda}\leq B I,
\end{align}
and
\begin{align}\label{f2}
x=\sum_{i\in\Bbb I}\Lambda_{i}^{\ast}\Lambda_{i}S_{\Lambda}^{-1}x,\qquad (x\in H).
\end{align}
It is obvious that when $\Lambda$ is a g-Bessel sequence with the bound $B$, then for each $i\in \Bbb I$,
$$\Vert\Lambda_i\Vert=\sup_{\Vert x\Vert=1}\Vert\Lambda_i x\Vert\leq\sqrt{B},$$
therefore, we have
$$\Vert\Lambda_i^*(h_i)\Vert\leq\sqrt{B}\Vert h_i\Vert,\qquad (\forall h_i\in H_i).$$
The following is a result for a stability of g-frames.
\begin{lemma}[\cite{naf}]\label{lemn}
Let $\Lambda$ is a g-frame for $H$ with bounds $A,B$ and $\Theta:=\lbrace \Theta_i\in\mathcal{B}(H,H_i)\rbrace_{i\in\Bbb I}$ be a family of operators. If there exist a $0<R<A$ such that
\begin{align}\label{lemn2}
\sum_{i\in\Bbb I}\Vert\Lambda_i x-\Theta_i x\Vert^2\leq R\Vert x\Vert^2,\qquad (\forall x\in H),
\end{align}
then, $\Theta$ is a g-frame for $H$ with bounds $(\sqrt{A}-\sqrt{R})^2$ and $(\sqrt{B}+\sqrt{R})^2$.
\end{lemma}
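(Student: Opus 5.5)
The plan is to use the triangle inequality in the Hilbert space $(\sum_{i\in\Bbb I}\oplus H_i)_{\ell^{2}}$, one direction for each frame bound. Fix $x\in H$. By hypothesis \eqref{lemn2}, the sequence $\{\Lambda_i x-\Theta_i x\}_{i\in\Bbb I}$ lies in this space and has norm at most $\sqrt{R}\Vert x\Vert$. Since $\Lambda$ is a g-Bessel sequence, $\{\Lambda_i x\}_{i\in\Bbb I}$ also lies in the space, with norm at most $\sqrt{B}\Vert x\Vert$. Writing $\Theta_i x=\Lambda_i x-(\Lambda_i x-\Theta_i x)$ shows that $\{\Theta_i x\}_{i\in\Bbb I}$ belongs to the space too, as a difference of two $\ell^2$-sequences. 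In particular $\sum_i\Vert\Theta_i x\Vert^2<\infty$, so $\Theta$ is well defined as a candidate g-Bessel family.

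\textbf{Upper bound.} Minkowski's inequality in the $\ell^2$-direct sum gives
$$\Big(\sum_{i\in\Bbb I}\Vert\Theta_i x\Vert^2\Big)^{1/2}\leq\Big(\sum_{i\in\Bbb I}\Vert\Lambda_i x\Vert^2\Big)^{1/2}+\Big(\sum_{i\in\Bbb I}\Vert\Lambda_i x-\Theta_i x\Vert^2\Big)^{1/2}\leq(\sqrt{B}+\sqrt{R})\Vert x\Vert.$$
Squaring yields the upper bound $(\sqrt{B}+\sqrt{R})^2$.

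\textbf{Lower bound.} Apply the reverse triangle inequality in the same space:
$$\Big(\sum_{i\in\Bbb I}\Vert\Theta_i x\Vert^2\Big)^{1/2}\geq\Big(\sum_{i\in\Bbb I}\Vert\Lambda_i x\Vert^2\Big)^{1/2}-\Big(\sum_{i\in\Bbb I}\Vert\Lambda_i x-\Theta_i x\Vert^2\Big)^{1/2}\geq(\sqrt{A}-\sqrt{R})\Vert x\Vert.$$
Because $0<R<A$, the right-hand side is nonnegative. Squaring therefore preserves the inequality and gives the lower bound $(\sqrt{A}-\sqrt{R})^2>0$. Together these show that $\Theta$ is a g-frame with the stated bounds.

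The only delicate points are the two justifications noted along the way. First, we must check that $\{\Theta_i x\}$ is square-summable before invoking the norm of the direct sum, which the opening decomposition handles. Second, we need the positivity $\sqrt{A}-\sqrt{R}>0$ to square the lower estimate safely, and this follows from $R<A$. Everything else is routine.
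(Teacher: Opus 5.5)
Your argument is correct. The paper gives no proof of its own for this lemma; it only cites it from the literature. Your argument, Minkowski's inequality and its reverse in $(\sum_{i}\oplus H_i)_{\ell^{2}}$ applied to $\{\Lambda_i x\}$ and $\{\Lambda_i x-\Theta_i x\}$, is the standard proof of this perturbation result. Your upper-bound half never uses $A$ or $R<A$, so it also gives the paper's subsequent remark that $\Theta$ is g-Bessel with bound $(\sqrt{B}+\sqrt{R})^2$ whenever $\Lambda$ is merely g-Bessel.
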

Notice that, in Lemma \ref{lemn}, when $\Lambda$ is a just g-Bessel sequence for $H$ with a bound $B$ and there exists a $R>0$ such that the relation \eqref{lemn2} holds, then $\Theta$ is a g-Bessel sequence too with a bound $(\sqrt{B}+\sqrt{R})^2$.
%%%%%%%%%%%%%%%%%%%%%%%
\section{generalized norm and generalized phase retrieval }
This section is devoted to the definitions and basic properties of generalized norm retrieval and generalized phase retrieval. After establishing the necessary notation, we examine how these concepts relate to classical norm retrieval and phase retrieval.
\begin{definition}
We call that $\Lambda$ is  a generalized phase retrieval (or briefly g-phase retrieval)  if for each $x,y\in H$ satisfying $\Vert\Lambda_i x\Vert=\Vert\Lambda_i y\Vert$ for all $i\in\Bbb I$, then there exist some scalar $\lambda$ such that $\vert\lambda\vert=1$ and $x=\lambda y$.
\end{definition}
\begin{definition}
We say that $\Lambda$ is  a generalized norm retrieval (or briefly g-norm retrieval)  if for each $x,y\in H$ satisfying $\Vert\Lambda_i x\Vert=\Vert\Lambda_i y\Vert$ for all $i\in\Bbb I$, then  $\Vert x\Vert=\Vert y\Vert$.
\end{definition}
It is clear that if $\Lambda$ is a g-phase retrieval, then it is a g-norm retrieval. Also, if $\Lambda$ is a tight g-frame, then it dose g-norm retrieval. Indeed, if $A$ is a tight g-frame bound for $\Lambda$ and $\Vert\Lambda_i x\Vert=\Vert\Lambda_i y\Vert$ for each $x,y\in H$ and $i\in\Bbb I$, we have
\begin{align*}
A\Vert x\Vert^2=\sum_{i\in\Bbb I}\Vert \Lambda_{i}x\Vert^2=\sum_{i\in\Bbb I}\Vert \Lambda_{i}y\Vert^2=A\Vert y\Vert^2,
\end{align*}
and we get $\Vert x\Vert=\Vert y\Vert$.
\begin{example}
Let $H=\Bbb R^2$ and $\{e_1,e_2\}$ be a standard basis. For each $x\in H$, we define
\begin{align*}
\Lambda_1: & H \rightarrow \Bbb C , \qquad \Lambda_1 x=\langle x,e_1\rangle,\\
\Lambda_2: & H \rightarrow \Bbb C , \qquad \Lambda_2 x=\langle x,e_2\rangle,\\
\Lambda_3: & H \rightarrow \Bbb C , \qquad \Lambda_3 x=\langle x,e_1+e_2\rangle.
\end{align*}
So, it is obvious that $\lbrace\Lambda_1, \Lambda_2,\Lambda_3\rbrace$ is a g-frame for $H$ with bounds $1$ and $3$. Now, assume that $\Vert\Lambda_i x\Vert=\Vert\Lambda_i y\Vert$ where $x=(x_1,x_2)$, $y=(y_1,y_2)$ and $i=1,2,3$. Therefore, we get
\begin{align*}
\begin{cases}
\vert x_1\vert=\vert y_1\vert,\\
\vert x_2\vert=\vert y_2\vert,\\
\vert x_1+x_2\vert=\vert y_1+y_2\vert.
\end{cases}
\end{align*}
Then, the set $\lbrace\Lambda_1, \Lambda_2,\Lambda_3\rbrace$ is a g-norm retrieval and g-phase retrieval with $\lambda=\pm 1$.
\end{example}
\begin{example}
Let $H=\Bbb R^3$. For each $x=(x_1,x_2,x_3)\in H$, we define
\begin{align*}
\Lambda_1: & H \rightarrow  H , \qquad \Lambda_1 x=(x_1,0,0),\\
\Lambda_2: & H \rightarrow  H , \qquad \Lambda_2 x=(0,x_2,0),\\
\Lambda_3: & H \rightarrow  H , \qquad \Lambda_3 x=(0,0,x_3),\\
\Lambda_4: & H \rightarrow  H , \qquad \Lambda_4 x=(0,0,x_1+x_2+x_3).
\end{align*}
So, it is obvious that $\lbrace\Lambda_1, \Lambda_2,\Lambda_3,\Lambda_4\rbrace$ is a g-frame for $H$ with bounds $1$ and $4$. Suppose that $\Vert\Lambda_i x\Vert=\Vert\Lambda_i y\Vert$ such that $x=(x_1,x_2,x_3)$, $y=(y_1,y_2,y_3)$ and $i=1,2,3,4$.
Then, the set $\lbrace\Lambda_1, \Lambda_2,\Lambda_3,\Lambda_4\rbrace$ is a g-norm retrieval and g-phase retrieval with $\lambda=\pm 1$.
\end{example}
\begin{example}
Let $H=\Bbb R^3$ and $\{e_1,e_2,e_3\}$ be a standard basis. We define
\begin{align*}
\Lambda_1: & H \rightarrow  H , \qquad \Lambda_1 e_1=e_2, \qquad \Lambda_1 e_2=e_3,\qquad \Lambda_1 e_3=e_1,\\
\Lambda_2: & H \rightarrow  H , \qquad \Lambda_2 e_1=e_1, \qquad \Lambda_2 e_2=e_3,\qquad \Lambda_2 e_3=0.
\end{align*}
So, it is obvious that $\lbrace\Lambda_1, \Lambda_2\rbrace$ is a g-frame for $H$ with bounds $1$ and $2$. But, if $\Vert\Lambda_2 x\Vert=\Vert\Lambda_2 y\Vert$ where $x=(x_1,x_2,x_3)$, $y=(y_1,y_2,y_3)$, therefore $\Vert x\Vert\neq\Vert y\Vert$ and so the set dose not g-norm retrieval and g-phase retrieval.
\end{example}
\begin{example}
Let $H=\Bbb R^2$ and for any $x=(x_1,x_2)$ define
\begin{align*}
\Lambda_1: & H \rightarrow  H , \qquad \Lambda_1 (x_1,x_2)=(x_1,x_2),\\
\Lambda_2: & H \rightarrow  H , \qquad \Lambda_2 (x_1,x_2)=(x_2,-x_1).
\end{align*}
So, it is obvious that $\lbrace\Lambda_1, \Lambda_2\rbrace$ is a g-tight frame for $H$ with the bound  $2$ and so the set dose  g-norm retrieval, but the set dose not g-phase retrieval.
\end{example}
\begin{theorem}\label{th1}
Let $\lbrace \Lambda_i U\in\mathcal{B}(H,H_i)\rbrace_{i\in\Bbb I}$ be a g-norm retrieval for all $U\in\mathcal{GL}(H)$. Then $\lbrace \Lambda_i\in\mathcal{B}(H,H_i)\rbrace_{i\in\Bbb I}$ is also a g-norm retrieval.
\end{theorem}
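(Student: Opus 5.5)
The hypothesis holds for every $U\in\mathrm{GL}(H)$, so it holds in particular for $U=I$. Since $I\in\mathrm{GL}(H)$ and $\Lambda_i I=\Lambda_i$ for every $i\in\Bbb I$, the family $\{\Lambda_i I\}_{i\in\Bbb I}$ is literally the family $\{\Lambda_i\}_{i\in\Bbb I}$. The hypothesis then gives the conclusion immediately. So the plan is simply to specialize to the identity and check the definition of g-norm retrieval directly.

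Concretely, I would take $x,y\in H$ with $\Vert\Lambda_i x\Vert=\Vert\Lambda_i y\Vert$ for all $i\in\Bbb I$. I would rewrite this as $\Vert(\Lambda_i I)x\Vert=\Vert(\Lambda_i I)y\Vert$ for all $i$. Because $\{\Lambda_i I\}_{i\in\Bbb I}$ does g-norm retrieval by hypothesis, this yields $\Vert x\Vert=\Vert y\Vert$, which is what the definition requires.

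For a version that makes fuller use of the hypothesis, I would instead fix an arbitrary $U\in\mathrm{GL}(H)$ and put $x'=U^{-1}x$ and $y'=U^{-1}y$. Then $\Lambda_iUx'=\Lambda_i x$ and $\Lambda_iUy'=\Lambda_i y$, so the equal measurements transfer to the family $\{\Lambda_iU\}_{i\in\Bbb I}$ at the points $x',y'$. This gives $\Vert U^{-1}x\Vert=\Vert U^{-1}y\Vert$ for every invertible $U$, and choosing $U=I$ returns $\Vert x\Vert=\Vert y\Vert$.

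There is no genuine obstacle here. The only point to check is that the identity operator belongs to $\mathrm{GL}(H)$, which is immediate since it is bounded with bounded inverse $I$. The statement's hypothesis is stronger than necessary, and the argument does not need any g-frame bounds or Lemma \ref{lemn}.
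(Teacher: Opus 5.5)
Your proof is correct, and your second variant is the paper's argument: pull back via $x'=U^{-1}x$, $y'=U^{-1}y$ and apply the hypothesis to $\{\Lambda_iU\}_{i\in\Bbb I}$. You actually handle the last step more carefully than the paper does: the g-norm retrieval property of $\{\Lambda_iU\}_{i\in\Bbb I}$ gives $\Vert x'\Vert=\Vert y'\Vert$, i.e.\ $\Vert U^{-1}x\Vert=\Vert U^{-1}y\Vert$, not $\Vert Ux'\Vert=\Vert Uy'\Vert$ as the paper writes, so one really must specialize to $U=I$ (or any unitary $U$), exactly as you do.
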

\begin{proof}
Suppose that $x,y\in H$ and $\Vert \Lambda_i x\Vert=\Vert \Lambda_i y\Vert$ for any $i\in\Bbb I$. For any $U\in\mathcal{GL}(H)$ there  exist $x',y'\in H$ such that $Ux'=x$ and $Uy'=y$. Thus $\Vert \Lambda_i Ux'\Vert=\Vert \Lambda_i Uy'\Vert$. Therefore, via the assumption, we get  $\Vert  Ux'\Vert=\Vert  Uy'\Vert$ and so $\Vert x\Vert=\Vert y\Vert$.
\end{proof}
\begin{lemma}\label{lm1}
If $\Vert Ux\Vert=\Vert Uy\Vert$ for any $x,y\in H$ and each $U\in\mathcal{GL}(H)$, then there exists a scalar $\lambda$ such that $y=\lambda x$ and $\vert\lambda\vert=1$.
\end{lemma}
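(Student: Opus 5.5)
The statement should be read as follows: $x,y\in H$ are fixed vectors with $\Vert Ux\Vert=\Vert Uy\Vert$ for every $U\in\mathcal{GL}(H)$, and we must show $y=\lambda x$ for some unimodular $\lambda$. The plan is to test the hypothesis against a small family of explicit invertible operators. These should be chosen so that the equalities of norms force first $\Vert x\Vert=\Vert y\Vert$, then $y\in\spa\{x\}$, and finally the unimodularity of the coefficient.

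\textbf{Step 1 (the norms agree).} Take $U=I$. This gives $\Vert x\Vert=\Vert y\Vert$. If $x=0$ then $y=0$ and any $\lambda$ with $\vert\lambda\vert=1$ works, so from now on assume $x\neq 0$.

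\textbf{Step 2 (collinearity).} This is the main step. Let $W=\spa\{x\}$ and consider, for $t>0$, the operator
\[
U_t=P_W+t\,P_{W^\perp}.
\]
It is self-adjoint and invertible with inverse $P_W+t^{-1}P_{W^\perp}$. We have $U_tx=x$, while writing $y=\alpha x+z$ with $z\in W^\perp$ gives
\[
\Vert U_t y\Vert^2=\vert\alpha\vert^2\Vert x\Vert^2+t^2\Vert z\Vert^2 .
\]
The hypothesis then says
\[
\Vert x\Vert^2=\vert\alpha\vert^2\Vert x\Vert^2+t^2\Vert z\Vert^2 \qquad \text{for all } t>0 .
\]
Comparing $t=1$ and $t=2$ forces $\Vert z\Vert=0$, so $y=\alpha x$. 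The only point to verify is that $U_t$ is indeed in $\mathcal{GL}(H)$, which is immediate since its inverse is written down explicitly. In the degenerate case $H$ one-dimensional, $y$ already lies in $\spa\{x\}$ and this step is trivial.

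\textbf{Step 3 (unimodularity).} From $y=\alpha x$ and Step 1 we get $\vert\alpha\vert\,\Vert x\Vert=\Vert x\Vert$, hence $\vert\alpha\vert=1$ because $x\neq0$. Setting $\lambda=\alpha$ finishes the proof.

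The only genuine idea is the choice of the anisotropic dilation $U_t$ in Step 2: it separates the component of $y$ orthogonal to $x$ by scaling it differently from the $x$-direction. Steps 1 and 3 are routine.
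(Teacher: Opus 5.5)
Your proof is correct and is essentially the paper's argument: the paper also combines $U=I$ (to get $\Vert x\Vert=\Vert y\Vert$) with the dilation that fixes $x$ and rescales its orthogonal complement. The paper does this in an orthonormal basis with $e_k=x/\Vert x\Vert$ and scaling factor $\frac12$, while you use the coordinate-free $P_W+tP_{W^\perp}$ with $t=2$; your version, written with $\vert\alpha\vert^2$, also covers complex scalars and the case $x=0$ explicitly, which the paper's computation with $y_k^2$ glosses over.
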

\begin{proof}
Let $k\in\Bbb I$ and $x,y\in H$ where $x\neq0$ and $\Vert Ux\Vert=\Vert Uy\Vert$ for each $U\in\mathcal{GL}(H)$. Choose an orthonormal basis $\{e_i\}_{i\in\Bbb I}$ for $H$ with $e_k=\dfrac{x}{\Vert x\Vert}$ and suppose that $y=\sum_{i\in I}y_i e_i$. Define $U:H\rightarrow H$ with $Ue_k=e_k$ and $Ue_i=\frac{1}{2}e_i$ for any $i\in\Bbb I$ and $i\neq k$. So, $U\in\mathcal{GL}(H)$ and we get
$$Uy=y_ke_k+\frac{1}{2}\sum_{i\neq k}y_ie_i.$$
Now, we have
\begin{align*}
\Vert x\Vert^2=\Vert Ux\Vert^2=\Vert Uy\Vert^2=y_k^2+\frac{1}{4}\sum_{i\neq k}y_i^2.
\end{align*}
On the other hand,
\begin{align*}
\Vert x\Vert^2=\Vert y\Vert^2=y_k^2+\sum_{i\neq k}y_i^2.
\end{align*}
Therefore, $\sum_{i\neq k}y_i^2=0$ and so $y_i=0$ for any $i\neq k$. Then, we get
$$y=y_k e_k=\frac{y_k}{\Vert x\Vert}x,$$
such that $\dfrac{\vert y_k\vert}{\Vert x\Vert}=1$.
\end{proof}
\begin{theorem}
The following conditions are equivalent.
\begin{enumerate}
\item[(I)] $\lbrace \Lambda_i\in\mathcal{B}(H,H_i)\rbrace_{i\in\Bbb I}$ is a g-phase retrieval.\\
\item[(II)] $\lbrace \Lambda_i U\in\mathcal{B}(H,H_i)\rbrace_{i\in\Bbb I}$ is a g-phase retrieval for all  $U\in\mathcal{GL}(H)$.\\
\item[(III)] $\lbrace \Lambda_i U\in\mathcal{B}(H,H_i)\rbrace_{i\in\Bbb I}$ is a g-norm retrieval for all $U\in\mathcal{GL}(H)$.\\
\end{enumerate}
\end{theorem}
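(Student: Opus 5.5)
The plan is to prove the cycle (I)$\Rightarrow$(II)$\Rightarrow$(III)$\Rightarrow$(I). The first two implications follow directly from the definitions. The last one combines the hypothesis, applied with suitably chosen operators $U$, with Lemma \ref{lm1}.

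\emph{(I)$\Rightarrow$(II).} Fix $U\in\mathcal{GL}(H)$ and let $x,y\in H$ with $\Vert\Lambda_i Ux\Vert=\Vert\Lambda_i Uy\Vert$ for all $i\in\Bbb I$. Apply (I) to the vectors $Ux$ and $Uy$. This gives a scalar $\lambda$ with $\vert\lambda\vert=1$ and $Ux=\lambda Uy$. Since $U$ is injective (it is invertible), we get $x=\lambda y$. Hence $\{\Lambda_iU\}_{i\in\Bbb I}$ is a g-phase retrieval.

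\emph{(II)$\Rightarrow$(III).} Every g-phase retrieval is a g-norm retrieval, as remarked after the definitions: from $x=\lambda y$ with $\vert\lambda\vert=1$ we get $\Vert x\Vert=\Vert y\Vert$. So (III) holds for every $U\in\mathcal{GL}(H)$.

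\emph{(III)$\Rightarrow$(I).} This is the main step. Let $x,y\in H$ with $\Vert\Lambda_i x\Vert=\Vert\Lambda_i y\Vert$ for all $i\in\Bbb I$. The goal is to show that $\Vert Vx\Vert=\Vert Vy\Vert$ for every $V\in\mathcal{GL}(H)$, and then invoke Lemma \ref{lm1}.

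Given $V\in\mathcal{GL}(H)$, take $U=V^{-1}\in\mathcal{GL}(H)$. Then
\[
\Vert\Lambda_iU(Vx)\Vert=\Vert\Lambda_i x\Vert=\Vert\Lambda_i y\Vert=\Vert\Lambda_iU(Vy)\Vert\quad (i\in\Bbb I).
\]
Because $\{\Lambda_iU\}_{i\in\Bbb I}$ is a g-norm retrieval by (III), it follows that $\Vert Vx\Vert=\Vert Vy\Vert$.

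It remains to apply Lemma \ref{lm1}, which requires one small case distinction. If $x=0$, then taking $V=I$ gives $\Vert y\Vert=\Vert x\Vert=0$, so $y=0$ and $x=\lambda y$ holds with $\lambda=1$. If $x\neq 0$, Lemma \ref{lm1} (whose proof assumes $x\neq0$) gives $y=\lambda x$ with $\vert\lambda\vert=1$. Then $x=\bar\lambda y$, and $\vert\bar\lambda\vert=1$, which is exactly the conclusion required by (I).

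The only delicate point is this last step. The trick is to choose $U=V^{-1}$, so that the norm-retrieval hypothesis on the family $\{\Lambda_iU\}$ yields equality of $\Vert Vx\Vert$ and $\Vert Vy\Vert$ for arbitrary invertible $V$. This is precisely the hypothesis of Lemma \ref{lm1}, and that lemma carries the remaining work.
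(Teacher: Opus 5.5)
Your proposal is correct and follows the paper's proof: the same cycle (I)$\Rightarrow$(II)$\Rightarrow$(III)$\Rightarrow$(I), whose key step applies (III) to the family $\{\Lambda_i V^{-1}\}$ at the points $Vx, Vy$ to get $\Vert Vx\Vert=\Vert Vy\Vert$ for every invertible $V$, and then invokes Lemma \ref{lm1}. You also handle the case $x=0$ and the passage from $y=\lambda x$ to $x=\bar\lambda y$ explicitly, both correctly, whereas the paper leaves these details implicit.
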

\begin{proof}
$(I)\Rightarrow(II)$. Let $i\in\Bbb I$ and $x,y\in H$. If $\Vert \Lambda_i Ux\Vert=\Vert \Lambda_i Uy\Vert$ where $U\in\mathcal{B}(H)$ is a invertible, so there exists a scalar $\lambda$ such that $Ux=\lambda Uy$ and $\vert\lambda\vert=1$. Then $x=\lambda y$.

$(II)\Rightarrow(III)$ is clear.

$(III)\Rightarrow(I)$. Assume that  $x,y\in H$ and $\Vert \Lambda_i x\Vert=\Vert \Lambda_i y\Vert$ for any $i\in\Bbb I$. Then  $\Vert \Lambda_i UU^{-1}x\Vert=\Vert \Lambda_i UU^{-1}y\Vert$ for each $U\in\mathcal{B}(H)$. So, via the assumption, we get $\Vert U^{-1}x\Vert=\Vert U^{-1}y\Vert$ for each $U\in\mathcal{B}(H)$ and by Lemma \ref{lm1} the proof is completed.
\end{proof}
\begin{theorem}\label{th2}
Let $H=\Bbb R^n$ and
 $\Lambda$ is a g-norm retrieval on $\Bbb R^n$. For any partition $\{I_i\}_{i=1,2}$ of $\Bbb I$ we have $\spa\{\Lambda^{\ast}_i(H_i)\}_{i\in I_1}^{\perp}\perp \spa\{\Lambda^{\ast}_i(H_i)\}_{i\in I_2}^{\perp}$.
\end{theorem}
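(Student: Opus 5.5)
Write $W_j=\spa\{\Lambda^{\ast}_i(H_i)\}_{i\in I_j}$ for $j=1,2$. We show that for $u\in W_1^{\perp}$ and $v\in W_2^{\perp}$ one has $\langle u,v\rangle=0$. We imitate the classical argument for real norm retrieval: compare $x=u+v$ with $y=u-v$. Everything is real ($H=\Bbb R^n$), so $\Vert x\Vert^2-\Vert y\Vert^2=4\langle u,v\rangle$. It therefore suffices to show that $\Vert\Lambda_i x\Vert=\Vert\Lambda_i y\Vert$ for every $i\in\Bbb I$. Then the g-norm retrieval hypothesis gives $\Vert x\Vert=\Vert y\Vert$, hence $\langle u,v\rangle=0$.

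The key observation is that $u\in W_1^{\perp}$ forces $\Lambda_i u=0$ for all $i\in I_1$. Indeed, for any $h\in H_i$,
\[
\langle \Lambda_i u,h\rangle=\langle u,\Lambda_i^{\ast}h\rangle=0,
\]
since $\Lambda_i^{\ast}h\in W_1$; taking $h=\Lambda_i u$ gives $\Lambda_i u=0$. In the same way, $\Lambda_i v=0$ for all $i\in I_2$.

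Now we check the measurements. Since $\{I_1,I_2\}$ is a partition, every index lies in exactly one of $I_1,I_2$. For $i\in I_1$ we have $\Lambda_i x=\Lambda_i v$ and $\Lambda_i y=-\Lambda_i v$, which have the same norm. For $i\in I_2$ we have $\Lambda_i x=\Lambda_i u=\Lambda_i y$. Hence $\Vert\Lambda_i x\Vert=\Vert\Lambda_i y\Vert$ for all $i\in\Bbb I$, and the conclusion follows as above.

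There is no serious obstacle. The main points of care are the polarization identity, which is valid because the space is real, and the fact that the partition must cover all of $\Bbb I$. Finite dimensionality is not really needed, except that each span is automatically closed there, so $W_j^{\perp}$ is unambiguous.
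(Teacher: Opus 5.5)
Your proof is correct and matches the paper's argument. You compare $u+v$ with $u-v$, use that for each $i$ one of $\Lambda_i u$, $\Lambda_i v$ vanishes so the measurements agree, and conclude $\langle u,v\rangle=0$ from $\Vert u+v\Vert=\Vert u-v\Vert$ in the real space; you also spell out why $\Lambda_i u=0$ for $i\in I_1$ (via $\langle\Lambda_i u,h\rangle=\langle u,\Lambda_i^{\ast}h\rangle$), which the paper leaves as ``easy to check.''
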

\begin{proof}
Assume that $\{I_i\}_{i=1,2}$ is a partition of $\Bbb I$, also $x\in \spa\{\Lambda^{\ast}_i(H_i)\}_{i\in I_1}^{\perp}$ and $y\in \spa\{\Lambda^{\ast}_i(H_i)\}_{i\in I_2}^{\perp}$. For each $i\in I$, it is easy to check that
$$\Vert\Lambda_i( x+y)\Vert^2=\Vert \Lambda_i x\Vert^2+\Vert \Lambda_i y\Vert^2=\Vert\Lambda_i( x-y)\Vert^2.$$
So, $\Vert\Lambda_i( x+y)\Vert=\Vert\Lambda_i( x-y)\Vert$, and by assumption, $\Vert x+y\Vert=\Vert x-y\Vert$. This implies that $\langle x,y\rangle=0$.
\end{proof}
In general, if $\Lambda$ is a g-tight frame on $H$ and a g-norm retrieval, then Theorem \ref{th2} is holds. Indeed, assume that $A$ is a bound of $\Lambda$, also $x\in \spa\{\Lambda^{\ast}_i(H_i)\}_{i\in I_1}^{\perp}$ and $y\in \spa\{\Lambda^{\ast}_i(H_i)\}_{i\in I_2}^{\perp}$. We can write, by \eqref{f1},
\begin{align*}
\langle x,y\rangle&=\sum_{i\in\Bbb I}\frac{1}{A^2}\langle \Lambda^{\ast}_i\Lambda_i x, y\rangle\\
&=\frac{1}{A^2}\Big\lbrace\sum_{i\in I_1}\langle \Lambda^{\ast}_i\Lambda_i x,  y\rangle+\sum_{i\in I_2}\langle \Lambda^{\ast}_i\Lambda_i x, y\rangle\Big\rbrace\\
&=\frac{1}{A^2}\Big\lbrace\sum_{i\in I_1}\langle  x, \Lambda^{\ast}_i\Lambda_i y\rangle+\sum_{i\in I_2}\langle \Lambda^{\ast}_i\Lambda_i x, y\rangle\Big\rbrace\\
&=0.
\end{align*}
Also, when  $\Lambda$ is a g-frame and $\Lambda_i S_{\Lambda}^{-1}=S_{\Lambda}^{-1}\Lambda_i $ for each $i\in\Bbb I$, then Theorem \ref{th2} is holds for $\Lambda$ on $H$. Indeed, if $x\in \spa\{\Lambda^{\ast}_i(H_i)\}_{i\in I_1}^{\perp}$ and $y\in \spa\{\Lambda^{\ast}_i(H_i)\}_{i\in I_2}^{\perp}$ again, then by \eqref{f2} we have,
\begin{align*}
\langle x,y\rangle&=\sum_{i\in\Bbb I}\langle \Lambda^{\ast}_i\Lambda_i S^{-1}x, y\rangle\\
&=\sum_{i\in I_1}\langle x, \Lambda^{\ast}_i\Lambda_i S^{-1} y\rangle+\sum_{i\in I_2}\langle \Lambda^{\ast}_i\Lambda_i S^{-1} x, y\rangle\\
&=0.
\end{align*}
\begin{theorem}\label{th4}
Let $H=\Bbb R^n$ and
 $\Lambda$ is a g-norm retrieval on $\Bbb R^n$. For  each non orthogonal vectors $x,y\in \Bbb R^n$ there exist $i\in\Bbb I$ and $h_i\in H_i$ such that  $\langle\Lambda^{\ast}_i(h_i),x\rangle\langle\Lambda^{\ast}_i(h_i), y\rangle\neq0$.
\end{theorem}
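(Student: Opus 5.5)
We argue by contradiction and reduce to Theorem \ref{th2}. Suppose $x,y\in\Bbb R^n$ satisfy $\langle x,y\rangle\neq0$, but for every $i\in\Bbb I$ and every $h_i\in H_i$ we have $\langle\Lambda^{\ast}_i(h_i),x\rangle\langle\Lambda^{\ast}_i(h_i),y\rangle=0$. The aim is to build a partition $\{I_1,I_2\}$ of $\Bbb I$ with $x\in\spa\{\Lambda^{\ast}_i(H_i)\}_{i\in I_1}^{\perp}$ and $y\in\spa\{\Lambda^{\ast}_i(H_i)\}_{i\in I_2}^{\perp}$. Theorem \ref{th2} then forces $\langle x,y\rangle=0$, which is the desired contradiction.

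The key step is to show that, for each fixed $i$, either $x\perp\Lambda^{\ast}_i(H_i)$ or $y\perp\Lambda^{\ast}_i(H_i)$. The subspace $\Lambda^{\ast}_i(H_i)$ is the range of a linear map, so it is a linear subspace $V_i$ of $\Bbb R^n$. The hypothesis says that the product of the two linear functionals $v\mapsto\langle v,x\rangle$ and $v\mapsto\langle v,y\rangle$ vanishes identically on $V_i$. Hence $V_i=(V_i\cap x^{\perp})\cup(V_i\cap y^{\perp})$. A vector space over an infinite field is not a union of two proper subspaces, so one of these two subspaces equals $V_i$. Concretely: if $u\in V_i$ with $\langle u,x\rangle\neq0$ (so $\langle u,y\rangle=0$) and $w\in V_i$ with $\langle w,y\rangle\neq0$ (so $\langle w,x\rangle=0$), then $u+w\in V_i$ has both inner products nonzero, contradicting the hypothesis.

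Now define $I_1=\{i\in\Bbb I:\ x\perp\Lambda^{\ast}_i(H_i)\}$ and $I_2=\Bbb I\setminus I_1$. By the previous step, every $i\in I_2$ satisfies $y\perp\Lambda^{\ast}_i(H_i)$. Therefore $x$ is orthogonal to $\spa\{\Lambda^{\ast}_i(H_i)\}_{i\in I_1}$ and $y$ is orthogonal to $\spa\{\Lambda^{\ast}_i(H_i)\}_{i\in I_2}$. Theorem \ref{th2} gives $\langle x,y\rangle=0$, contradicting non-orthogonality.

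A degenerate partition with one part empty needs a check. Either check that the proof of Theorem \ref{th2} goes through unchanged in that case (it does), or argue directly: if $I_2=\emptyset$, then $\Lambda_i x=0$ for all $i$. So $\Vert\Lambda_i x\Vert=\Vert\Lambda_i 0\Vert$, and norm retrieval gives $x=0$, contradicting $\langle x,y\rangle\neq0$. The case $I_1=\emptyset$ is symmetric. The main obstacle is the union-of-two-subspaces step; the rest is bookkeeping.
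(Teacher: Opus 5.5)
Your proof is correct and follows the same route as the paper. You assume a non-orthogonal pair violating the conclusion, split $\Bbb I$ according to which of $x,y$ is orthogonal to $\Lambda_i^{\ast}(H_i)$, and invoke Theorem \ref{th2} to reach $\langle x,y\rangle=0$. Your union-of-two-subspaces step, the choice $I_2=\Bbb I\setminus I_1$, and the check of the degenerate partitions fill in what the paper leaves implicit: it defines $I_1,I_2$ through an unquantified $h_i$ (with a typo making both definitions refer to $x$), and never verifies that each $\Lambda_i^{\ast}(H_i)$ lies entirely in $x^{\perp}$ or in $y^{\perp}$, or that the two sets form a partition.
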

\begin{proof}
Suppose that there exist non orthogonal vectors $x,y\in\Bbb R^n$ such that
$\langle\Lambda^{\ast}_i(h_i),x\rangle\langle\Lambda^{\ast}_i(h_i), y\rangle=0$
 for each $h_i\in H_i$ and $i\in I$. Define, $I_1=\{i\in\Bbb I, \quad \langle\Lambda^{\ast}_i(h_i),x\rangle=0\}$ and $I_2=\{i\in\Bbb I, \quad \langle\Lambda^{\ast}_i(h_i),x\rangle=0\}$. Then $x\in \spa\{\Lambda^{\ast}_i(H_i)\}_{i\in I_1}^{\perp}$ and $y\in \spa\{\Lambda^{\ast}_i(H_i)\}_{i\in I_2}^{\perp}$, therefore by Theorem \ref{th2}, we get $\langle x,y\rangle=0$ and this is a contraction.
\end{proof}
The inverse of Theorem \ref{th2} can be hold with addition two condition on the operators $\Lambda_i$.
\begin{theorem}\label{th5}
Let $\Lambda=\{\Lambda_i\in\mathcal{B}(\Bbb R^n,\Bbb C)\}_{i\in I}$ and $I_1, I_2$ be arbitrary partition of $\Bbb I$.
If $\spa\{\Lambda_i^*(\Bbb C)\}^{\perp}_{i\in I_1}\perp\spa\{\Lambda_i^*(\Bbb C)\}^{\perp}_{i\in I_2}$ and all of the operators $\Lambda_i^*$ are surjective, then  $\Lambda$ is a g-norm retrieval on $\Bbb R^n$.
\end{theorem}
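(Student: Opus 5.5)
The plan is to run the classical ``partition'' argument for norm retrieval, with the hypothesis applied to the partition determined by the signs of the measurements. First I reduce each $\Lambda_i$ to a vector. Since $\Lambda_i\in\mathcal{B}(\Bbb R^n,\Bbb C)$, by the Riesz representation there is a vector $\varphi_i:=\Lambda_i^*(1)\in\Bbb R^n$ with $\Lambda_i x=\langle x,\varphi_i\rangle$ for all $x\in\Bbb R^n$. Then $\Lambda_i^*(\Bbb C)=\spa\{\varphi_i\}$, because the adjoint is taken into the real space $\Bbb R^n$.

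I will use the surjectivity assumption on $\Lambda_i^*$ (together with the ambient space being real) only to guarantee two things. The values $\Lambda_i x$ are real numbers, and $\varphi_i\neq 0$ for each $i$. The realness is what turns equality of moduli into equality up to a sign. I expect that fixing this interpretation is the one delicate point of the proof. If $\Lambda_i x$ could be genuinely complex, then $\vert a\vert=\vert b\vert$ would not force $a=\pm b$ and the argument would break. So I would state explicitly that $\langle x,\varphi_i\rangle\in\Bbb R$ for $x\in\Bbb R^n$.

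Now let $x,y\in\Bbb R^n$ with $\vert\Lambda_i x\vert=\vert\Lambda_i y\vert$ for every $i\in\Bbb I$. Then for each $i$ either $\langle x,\varphi_i\rangle=\langle y,\varphi_i\rangle$ or $\langle x,\varphi_i\rangle=-\langle y,\varphi_i\rangle$. Define
\begin{align*}
I_1=\{i\in\Bbb I:\ \langle x-y,\varphi_i\rangle=0\},\qquad I_2=\Bbb I\setminus I_1 .
\end{align*}
For $i\in I_2$ the first alternative fails, so the second holds, which means $\langle x+y,\varphi_i\rangle=0$. Hence $x-y\in\spa\{\Lambda_i^*(\Bbb C)\}_{i\in I_1}^{\perp}$ and $x+y\in\spa\{\Lambda_i^*(\Bbb C)\}_{i\in I_2}^{\perp}$. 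If one of $I_1$ or $I_2$ is empty, the corresponding orthogonal complement is all of $\Bbb R^n$, and the hypothesis still applies to that trivial partition.

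By the hypothesis these two subspaces are orthogonal, so $\langle x-y,x+y\rangle=0$. In the real space this gives $\Vert x\Vert^2-\Vert y\Vert^2=0$, hence $\Vert x\Vert=\Vert y\Vert$. This shows that $\Lambda$ is a g-norm retrieval.
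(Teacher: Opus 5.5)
Your proof is correct and is essentially the paper's argument: you write $\Lambda_i x=\langle x,\varphi_i\rangle$ by Riesz representation, split $\mathbb{I}$ according to whether $x-y$ or $x+y$ is orthogonal to $\varphi_i$, apply the hypothesis to that partition, and conclude from $\langle x+y,x-y\rangle=0$. One correction to your framing: surjectivity of $\Lambda_i^*$ does not force $\Lambda_i x$ to be real; that is an implicit convention the paper also adopts silently (for the real-linear map $\Lambda_1x=x_1+2\mathrm{i}x_2$ on $\mathbb{R}^2$ the adjoint is surjective and the partition condition holds, yet $\vert\Lambda_1 e_1\vert=\vert\Lambda_1(e_2/2)\vert$ while $\Vert e_1\Vert\neq\Vert e_2/2\Vert$), and since $\varphi_i=\Lambda_i^*(1)$ already lies in the range of $\Lambda_i^*$, neither your proof nor the paper's actually uses the surjectivity hypothesis.
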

\begin{proof}
Suppose that $x,y\in\Bbb R^n$ and $i\in I$ so that $\Vert\Lambda_i x\Vert=\Vert\Lambda_i y\Vert$. Via Riesz representation theorem, we have a unique $z_i\in\Bbb R^n$ where $\Lambda_i x=\langle x,z_i\rangle$ and $\Lambda_i y=\langle y,z_i\rangle$.  Therefore, $\vert\langle x,z_i\rangle\vert=\vert\langle y,z_i\rangle\vert$ or $\langle x,z_i\rangle=\pm\langle y,z_i\rangle$. Now, we define $I_1=\{i\in\Bbb I, \quad \langle x,z_i\rangle=-\langle y,z_i\rangle\}$ and $I_2=\Bbb I/I_1$. So, $I_1$ and $I_2$ are a partition for $\Bbb I$ and also if $i\in I_1$ then $\langle x+y,z_i\rangle=0$ and for $i\in I_2$ we have $\langle x-y,z_i\rangle=0$. By hypothesis, there exist $w_i\in\Bbb C$ such that $z_i=\Lambda_i^* w_i$. Thus $x+y\in\spa\{\Lambda_i^* w_i\}^{\perp}_{i\in I_1}$ and $x-y\in\spa\{\Lambda_i^* w_i\}^{\perp}_{i\in I_2}$. This implies that $\langle x+y,x-y\rangle=0$ and we conclude that $\Vert x\Vert=\Vert y\Vert$.
\end{proof}
\begin{remark}
Assume that $U\in\mathcal{B}(H,K)$ and $U^*$ is a surjective operator. Since $\ker U=\mathcal{R}(U^*)^{\perp}$, then we can get $\ker U=\{0\}$ and so, $U$ is an injective operator. Thus, in Theorem \ref{th5}, the condition :"all of $\Lambda_i^*$ are surjective" can be changed by "all of $\Lambda_i$ are injective". But, in the Theorem, the surjective condition was important.
\end{remark}
\begin{theorem}\label{th6}
Let $\Lambda=\{\Lambda_i\in\mathcal{B}(\Bbb R^n,\Bbb C)\}_{i\in\Bbb I}$. Then the followings are equivalent:
\begin{enumerate}
\item $\Lambda$ is a g-norm retrievable.
\item For  each non orthogonal vectors $x,y\in \Bbb R^n$ there exist $i\in\Bbb I$ and $h\in\Bbb C$ such that
$$\langle\Lambda^{\ast}_i(h),x\rangle\langle\Lambda^{\ast}_i(h), y\rangle\neq0.$$
\item For any $\varepsilon\neq 0$, non-zero  $h\in\Bbb C$ and all of the operators $\Lambda_i^*$ are surjective, there exists a real number $C_0>0$ such that for all $x,y\in\Bbb R^n$, we have
\begin{align}\label{rrr}
R_{h,\varepsilon}(x,y):=\big\vert\langle x,y\rangle-\varepsilon\Vert x\Vert \Vert y\Vert\big\vert^2+\frac{1}{\Vert h\Vert^4}\sum_{i\in\Bbb I}\vert\langle\Lambda^{\ast}_i(h),x\rangle\vert^2 \vert\langle\Lambda^{\ast}_i(h), y\rangle\vert^2\geq C_0\Vert x\Vert^2 \Vert y\Vert^2.
\end{align}
\end{enumerate}
\end{theorem}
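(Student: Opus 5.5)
I will prove the cycle $(1)\Rightarrow(2)\Rightarrow(1)$ and then $(2)\Leftrightarrow(3)$. Throughout I use the Riesz representation from the proof of Theorem \ref{th5}: each $\Lambda_i\in\mathcal{B}(\Bbb R^n,\Bbb C)$ is $\Lambda_i x=\langle x,z_i\rangle$. Then $\Lambda_i^*(h)=\bar h z_i$ up to the convention for conjugation, so $\Lambda_i^*(\Bbb C)=\spa\{z_i\}$. Consequently, $\langle\Lambda_i^*(h),x\rangle\langle\Lambda_i^*(h),y\rangle=|h|^2\langle z_i,x\rangle\langle z_i,y\rangle$, up to conjugation of the scalars. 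Hence condition (2) says exactly that for non-orthogonal $x,y$ there is an $i$ with $\langle x,z_i\rangle\langle y,z_i\rangle\neq0$.

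$(1)\Rightarrow(2)$ is Theorem \ref{th4} specialized to $H_i=\Bbb C$. For $(2)\Rightarrow(1)$ I repeat the argument of Theorem \ref{th5}, but without the partition hypothesis. Suppose $|\langle x,z_i\rangle|=|\langle y,z_i\rangle|$ for all $i$. Since everything is real, $\langle x,z_i\rangle=\pm\langle y,z_i\rangle$ for each $i$. Put $u=x+y$ and $v=x-y$. For every $i$, either $\langle u,z_i\rangle=0$ or $\langle v,z_i\rangle=0$, so $\langle u,z_i\rangle\langle v,z_i\rangle=0$ for all $i$. By the contrapositive of (2), $u$ and $v$ are orthogonal. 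Then $\langle x+y,x-y\rangle=\|x\|^2-\|y\|^2=0$, which gives g-norm retrieval. Surjectivity of $\Lambda_i^*$ is not needed here.

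For $(2)\Rightarrow(3)$ I use compactness. Both sides of \eqref{rrr} are homogeneous of degree $2$ in $x$ and in $y$ separately. The left side is also invariant under $h\mapsto$ any nonzero scalar multiple, because of the factor $\|h\|^{-4}$. So it suffices to show that
$$F(x,y)=\big|\langle x,y\rangle-\varepsilon\big|^2+\sum_i|\langle z_i,x\rangle|^2|\langle z_i,y\rangle|^2$$
is strictly positive on the compact set $S^{n-1}\times S^{n-1}$. Then $C_0=\min F>0$ works, since $F$ is continuous. Here $\mathbb I$ is finite, or at least the sum converges uniformly because $\{z_i\}$ is Bessel. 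Suppose instead that $F(x,y)=0$ for some unit vectors $x,y$. Then $\langle x,y\rangle=\varepsilon\neq0$, so $x,y$ are non-orthogonal. Also $\langle z_i,x\rangle\langle z_i,y\rangle=0$ for all $i$, which contradicts (2).

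For $(3)\Rightarrow(2)$, take non-orthogonal unit vectors $x,y$ and choose $\varepsilon=\langle x,y\rangle\neq0$ and $h=1$. Then the first term of \eqref{rrr} vanishes, and (3) forces $\sum_i|\langle z_i,x\rangle|^2|\langle z_i,y\rangle|^2\ge C_0>0$, so some summand is nonzero. The main obstacle is interpreting (3) correctly. The quantifier "for any $\varepsilon\neq0$" must be read as \emph{for each} fixed $\varepsilon$ with a constant $C_0=C_0(\varepsilon)$, and $\varepsilon$ should range over values that $\langle x,y\rangle$ can attain on the unit sphere, that is, $0<|\varepsilon|\le1$. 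For $|\varepsilon|>1$ the first term alone is bounded below and the condition is vacuous. I will make this reading explicit and note that the surjectivity hypothesis only guarantees $z_i\neq0$, so it plays no essential role. I would also double-check the conjugation conventions, since $\Lambda_i$ maps a real space into $\Bbb C$.
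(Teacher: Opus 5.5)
Your proof is correct, but it arranges the implications differently from the paper.

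The paper runs the cycle $(1)\Rightarrow(2)\Rightarrow(3)\Rightarrow(1)$. Its first two steps are the same as yours: Theorem \ref{th4}, then minimizing $F_{h,\varepsilon}$ over $\mathcal S\times\mathcal S$. Its $(3)\Rightarrow(1)$ is done by contradiction via the contrapositive of Theorem \ref{th5}. This produces a partition $I_1,I_2$ and non-orthogonal $x,y$ in the two orthogonal complements, after which $\varepsilon=\langle x,y\rangle/(\Vert x\Vert\Vert y\Vert)$ and $h=(1,0)$ give $R_{h,\varepsilon}(x,y)=0$.

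You separate that last step into its two ingredients. Your $(3)\Rightarrow(2)$ is exactly the paper's final choice of $\varepsilon$. Your $(2)\Rightarrow(1)$ is the sign-splitting argument from the proof of Theorem \ref{th5}, with the partition hypothesis replaced by (2) applied to $u=x+y$, $v=x-y$.

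The paper's route is shorter because it reuses Theorem \ref{th5}. However, that is precisely where it uses the surjectivity of the $\Lambda_i^*$. Read literally in the model $\Lambda_i x=\langle x,z_i\rangle$, this hypothesis fails for every $n\ge2$, because $\Lambda_i^*(\mathbb C)$ lies on the line through $z_i$. Your route never uses surjectivity, so it gives $(1)\Leftrightarrow(2)$ unconditionally.

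Your reduction to the vectors $z_i$ also justifies a step the paper passes over in $(2)\Rightarrow(3)$. Condition (2) only provides \emph{some} $h$, yet the paper needs positivity of $F_{h,\varepsilon}$ for the \emph{fixed} $h$ in (3).

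One small refinement: the theorem assumes neither $\vert\mathbb I\vert<\infty$ nor a Bessel bound. So instead of appealing to uniform convergence, observe that $F$ is lower semicontinuous, being an increasing limit of continuous partial sums (possibly $+\infty$). That is enough for its infimum on $S^{n-1}\times S^{n-1}$ to be attained, and hence positive by (2).
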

\begin{proof}
$(1)\Rightarrow (2)$ is clear by Theorem \ref{th4}.

$(2)\Rightarrow (3)$. Let $\varepsilon\neq 0$ and $0\neq h\in\Bbb C$. If $x=0$ or $y=0$, then \eqref{rrr} holds true. Define for each $x,y\in\Bbb R^n$,
$$F_{h,\varepsilon}(x,y)=\big\vert\langle x,y\rangle-\varepsilon\big\vert^2+\frac{1}{\Vert h\Vert^4}\sum_{i\in\Bbb I}\vert\langle\Lambda^{\ast}_i(h),x\rangle\vert^2 \vert\langle\Lambda^{\ast}_i(h), y\rangle\vert^2.$$
So, $F_{h,\varepsilon}$ is a continuous function on $\Bbb R^n\times\Bbb R^n$. Let
$$C_0:=\min_{(x,y)\in \mathcal{S}\times\mathcal{S}}F_{h,\varepsilon}(x,y),$$
where $\mathcal{S}=\{x\in\Bbb R^n,\quad \Vert x\Vert=1\}$. By (2) we get $C_0>0$ and for all non-zero vectors $x,y\in\Bbb R^n$, we have
$$R_{h,\varepsilon}(x,y)=\Vert x\Vert^2\Vert y\Vert^2 F_{h,\varepsilon}\Big(\frac{x}{\Vert x\Vert},\frac{y}{\Vert y\Vert}\Big)\geq C_0 \Vert x\Vert^2\Vert y\Vert^2.$$
$(3)\Rightarrow(1)$. Suppose that $\Lambda$ is not a g-norm retrievable for $\Bbb R^n$. Then, by Theorem \ref{th5}, there exist a partition $I_1,I_2$ for $\Bbb I$ and non-zero vectors $x\in \spa\{\Lambda^{\ast}_i(H_i)\}_{i\in I_1}^{\perp}$ and $y\in \spa\{\Lambda^{\ast}_i(H_i)\}_{i\in I_2}^{\perp}$ such that $\langle x,y\rangle\neq 0$. Now, for $\varepsilon:=\frac{\langle x,y\rangle}{\Vert x\Vert \Vert y\Vert}$ and $h=(1,0)$ we get
$R_{h,\varepsilon}(x,y)=0$. Thus $C_0=0$ and this is a contradiction.
\end{proof}
In the next result, the stability of g-norm retrievable for $\Bbb R^n$ will be discussed.
\begin{theorem}
Let $\Lambda=\{\Lambda_i\in\mathcal{B}(\Bbb R^n,\Bbb C)\}_{i\in\Bbb I}$ be a g-norm retrievable and g-Bessel sequence for $\Bbb R^n$ where $\vert\Bbb I\vert<\infty$, also all of the operators $\Lambda_i^*$ are surjective. Then there exists a $\lambda>0$ such that any set $\Theta:=\lbrace \Theta_i\in\mathcal{B}(\Bbb R^n,\Bbb C)\rbrace_{i\in\Bbb I}$ satisfying
$$\max_{i\in\Bbb I}\Vert\Lambda_i-\Theta_i\Vert<\lambda$$ is also a g-norm retrievable and g-Bessel sequence for $\Bbb R^n$.
\end{theorem}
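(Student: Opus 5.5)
The plan is to treat the two conclusions separately, since the Bessel part is essentially free in finite dimensions and all the difficulty lies in the norm retrieval part.

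\textbf{Bessel part.} By the Riesz representation theorem (as in the proof of Theorem \ref{th5}), write $\Lambda_i x=\langle x,z_i\rangle$ and $\Theta_i x=\langle x,w_i\rangle$. Then $\Vert\Lambda_i-\Theta_i\Vert=\Vert z_i-w_i\Vert$. Since $\vert\Bbb I\vert<\infty$, for every $x\in\Bbb R^n$ we have
\begin{align*}
\sum_{i\in\Bbb I}\Vert\Lambda_i x-\Theta_i x\Vert^2\leq \vert\Bbb I\vert\,\lambda^2\Vert x\Vert^2 .
\end{align*}
By the remark following Lemma \ref{lemn}, $\Theta$ is then a g-Bessel sequence with bound $(\sqrt{B}+\sqrt{\vert\Bbb I\vert}\lambda)^2$, for any $\lambda>0$.

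\textbf{Norm retrieval part.} I would use the characterization in Theorem \ref{th6}, in its partition form (Theorems \ref{th2} and \ref{th5}). Since every $\Lambda_i^*$ is surjective, each $z_i\neq0$, and for small $\lambda$ each $w_i\neq0$ as well, so $\Theta_i^*$ is still surjective. It then suffices to show, for each of the finitely many partitions $I_1\cup I_2=\Bbb I$, that
\[
\spa\{w_i\}_{i\in I_1}^{\perp}\perp\spa\{w_i\}_{i\in I_2}^{\perp},
\]
and Theorem \ref{th5} then gives that $\Theta$ is a g-norm retrieval. For a fixed partition I would argue by continuity. If the perturbed orthogonal complements fail to be orthogonal, choose unit vectors $x_k,y_k$ in them with $\langle x_k,y_k\rangle\neq0$. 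Pass to limits using compactness of the unit sphere and finiteness of $\Bbb I$. The goal is a limiting pair for $\{z_i\}$ that contradicts item (2) of Theorem \ref{th6}; item (3) supplies the quantitative constant $C_0$ that such an argument needs.

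\textbf{Main obstacle.} The limiting step is where I expect the argument to break down. The quantity $\langle x_k,y_k\rangle$ may tend to $0$, and in that case no contradiction with (2) arises, because (2) only constrains non-orthogonal pairs. Worse, the dimension of $\spa\{w_i\}_{i\in I_1}^{\perp}$ can drop under perturbation, so these complements are not continuous in the $w_i$. A first check already shows the difficulty. In $\Bbb R^2$ take $z_1=e_1$ and $z_2=e_2$; this family does g-norm retrieval. Now take $w_1=e_1$ and $w_2=e_2+\delta e_1$. For the partition $\{1\},\{2\}$ the two complements are non-orthogonal lines, so $\Theta$ fails norm retrieval for every $\delta>0$.

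So before writing the proof I would either add a hypothesis that rules this out, or restrict to perturbations preserving the partition condition. One such hypothesis is that every $\spa\{z_i\}_{i\in I_j}$ is either $\{0\}$ or all of $\Bbb R^n$, which holds, for instance, when the $z_i$ are a full spark family with $\vert\Bbb I\vert\geq 2n-1$. This condition is stable under small perturbations because spanning is an open condition. The continuity argument above then goes through, with $\lambda$ chosen smaller than the distance from $\{z_i\}$ to the set of families in which some relevant subfamily drops rank.
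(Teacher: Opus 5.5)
Your counterexample is correct, so the statement cannot be proved as written. Take $z_1=e_1$, $z_2=e_2$; this family is orthonormal, hence Parseval and norm retrieving. Perturb to $w_1=e_1$, $w_2=e_2+\delta e_1$. The vectors $x=(1,1-\delta)$ and $y=(-1,1+\delta)$ satisfy $\vert\langle x,w_i\rangle\vert=\vert\langle y,w_i\rangle\vert=1$ for $i=1,2$. Yet $\Vert x\Vert^2-\Vert y\Vert^2=-4\delta$, while $\max_i\Vert\Lambda_i-\Theta_i\Vert=\delta$.

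Reading the surjectivity hypothesis as $z_i\neq0$ is reasonable. The only place the paper uses it is the proof of Theorem \ref{th5}, to write $z_i=\Lambda_i^*w_i$, and that holds automatically. The literal reading does not rescue the statement either. Regard $\mathcal{B}(\mathbb{R}^n,\mathbb{C})$ as real-linear maps into $\mathbb{C}\cong\mathbb{R}^2$; then surjective adjoints force $n\le2$. For $n=2$, take the single operator $\Lambda_1(x_1,x_2)=x_1+\mathrm{i}x_2$, which is tight with $\Lambda_1^*$ onto, and $\Theta_1(x_1,x_2)=x_1+\mathrm{i}(1+\delta)x_2$. Then $(1+\delta,0)$ and $(0,1)$ have the same measurement $1+\delta$ but different norms.

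Your Bessel part is fine; in fact any finite family on $\mathbb{R}^n$ is g-Bessel.

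The paper's proof fails at the order of quantifiers. It fixes $\varepsilon\neq0$ and $h$, takes $C_0=C_0(\varepsilon)$ from Theorem \ref{th6}, and chooses $\lambda$ below $C_0(\varepsilon)/((B+B')(\sqrt{B}+\sqrt{B'}))$. It then bounds $R'_{h,\varepsilon}$ from below for that one $\varepsilon$ only. But $(3)\Rightarrow(1)$ of Theorem \ref{th6} needs the bound at $\varepsilon=\langle x,y\rangle/(\Vert x\Vert\Vert y\Vert)$ for a hypothetical bad pair $x,y$ of $\Theta$, which is not known in advance. So a single $\lambda$ would have to serve every $\varepsilon\neq0$, and it cannot. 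If some partition has two nonzero complements, unit vectors $x,y$ taken from them give $F_{h,\varepsilon}(x,y)=\varepsilon^2$. Hence $C_0(\varepsilon)\le\varepsilon^2$ and $\lambda(\varepsilon)\to0$. In your example the bad pair $e_2$, $(1,-\delta)$ has $\vert\varepsilon\vert=\delta/\sqrt{1+\delta^2}$. That is exactly where the paper's $\lambda(\varepsilon)=O(\varepsilon^2)$ is smaller than $\delta$. The constants also contain slips, e.g.\ $\sqrt{B}$ for $\sqrt{B'}$ and $\Vert h\Vert^4$ for $\Vert h\Vert^2$, but those are repairable; the quantifier order is not.

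Your proposed extra hypothesis needs restating. The condition ``every $\spa\{z_i\}_{i\in I_j}$ is $\{0\}$ or $\mathbb{R}^n$'' fails for full spark families, since a one-element class spans a line. What you want is the complement property: for every partition, $\spa\{z_i\}_{i\in I_1}=\mathbb{R}^n$ or $\spa\{z_i\}_{i\in I_2}=\mathbb{R}^n$. Then one complement is $\{0\}$ and the hypothesis of Theorem \ref{th5} holds trivially. Openness follows directly because spanning is open and there are finitely many partitions, so no limiting argument is needed.

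This condition is also necessary. Suppose some partition has two nonzero complements, and pick unit $v_1,v_2$ in them; they are orthogonal by Theorem \ref{th2}. Replace each $z_i$, $i\in I_2$, by its projection onto $(v_2+\delta v_1)^{\perp}$, which moves $z_i$ by at most $\delta\Vert z_i\Vert$. The new complements contain $v_1$ and $v_2+\delta v_1$, which are not orthogonal, so the perturbed family is not norm retrieving. Hence a norm retrieving $\Lambda$ has a neighbourhood of norm retrieving families exactly when it has the complement property, i.e.\ when it does phase retrieval.
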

\begin{proof}
Let $\varepsilon\neq 0$ and $0\neq h\in\Bbb C$. Via Theorem \ref{th6}, there exists $C_0>0$ so that,
\begin{align*}
R_{h,\varepsilon}(x,y):=\big\vert\langle x,y\rangle-\varepsilon\Vert x\Vert \Vert y\Vert\big\vert^2+\frac{1}{\Vert h\Vert^4}\sum_{i\in\Bbb I}\vert\langle\Lambda^{\ast}_i(h),x\rangle\vert^2 \vert\langle\Lambda^{\ast}_i(h), y\rangle\vert^2\geq C_0\Vert x\Vert^2 \Vert y\Vert^2,
\end{align*}
for each $x,y\in\Bbb R^n$. Assume that $B$ is the bound of g-Bessel for $\Lambda$ and first, we put
$0<\lambda<\vert\Bbb I\vert$. So, for any $x\in\Bbb R^n$ we can write
\begin{align*}
\sum_{i\in\Bbb I}\Vert\Lambda_i x-\Theta_i x\Vert^2<\vert\Bbb I\vert^2 \Vert x\Vert^2.
\end{align*}
Thus, by Lemma \ref{lemn}, the set $\Theta$ is a g-Bessel sequence for $\Bbb R^n$ with the bound $B':=\vert\Bbb I\vert+\sqrt{B}$. Now, put
\begin{align}
0<\lambda<\min\Big\{\vert\Bbb I\vert, \frac{C_0}{(B+B')(\sqrt{B}+\sqrt{B'})}\Big\},
\end{align}
and
\begin{align*}
R'_{h,\varepsilon}(x,y):=\big\vert\langle x,y\rangle-\varepsilon\Vert x\Vert \Vert y\Vert\big\vert^2+\frac{1}{\Vert h\Vert^4}\sum_{i\in\Bbb I}\vert\langle\Theta^{\ast}_i(h),x\rangle\vert^2 \vert\langle\Theta^{\ast}_i(h), y\rangle\vert^2.
\end{align*}
Notice that, $\Vert\Lambda^*_i h\Vert\leq\sqrt{B}\Vert h\Vert$ and $\Vert\Theta^*_i h\Vert\leq\sqrt{B}\Vert h\Vert$. Now, we have
\begin{align*}
R_{h,\varepsilon}(x,y)-R'_{h,\varepsilon}(x,y)&=\frac{1}{\Vert h\Vert^4}\sum_{i\in\Bbb I}\Big(\vert\langle\Lambda^{\ast}_i(h),x\rangle\vert^2 \vert\langle\Lambda^{\ast}_i(h), y\rangle\vert^2-\vert\langle\Theta^{\ast}_i(h),x\rangle\vert^2 \vert\langle\Theta^{\ast}_i(h), y\rangle\vert^2\Big)\\
&\leq\frac{1}{\Vert h\Vert^4}\sum_{i\in\Bbb I}\Big\vert\vert\langle\Lambda^{\ast}_i(h),x\rangle\vert \vert\langle\Lambda^{\ast}_i(h), y\rangle\vert-\vert\langle\Theta^{\ast}_i(h),x\rangle\vert \vert\langle\Theta^{\ast}_i(h), y\rangle\vert\Big\vert\\
&\qquad\times\Big\vert \vert\langle\Lambda^{\ast}_i(h),x\rangle\vert \vert\langle\Lambda^{\ast}_i(h), y\rangle\vert+\vert\langle\Theta^{\ast}_i(h),x\rangle\vert \vert\langle\Theta^{\ast}_i(h), y\rangle\vert\Big\vert.
\end{align*}
$Since,$
\begin{align*}
\Big\vert\vert\langle\Lambda^{\ast}_i(h),x\rangle\vert \vert\langle\Lambda^{\ast}_i(h), y\rangle\vert&-\vert\langle\Theta^{\ast}_i(h),x\rangle\vert \vert\langle\Theta^{\ast}_i(h), y\rangle\vert\Big\vert\\
&\leq\Big\vert\langle\Lambda^{\ast}_i(h),x\rangle \langle\Lambda^{\ast}_i(h), y\rangle-\langle\Theta^{\ast}_i(h),x\rangle\langle\Theta^{\ast}_i(h), y\rangle\Big\vert\\
&=\Big\vert\langle\Lambda^{\ast}_i(h),x\rangle \langle\Lambda^{\ast}_i(h)-\Theta_i^*(h), y\rangle-\langle\Lambda^{\ast}_i(h)-\Theta_i^*(h),x\rangle\langle\Theta^{\ast}_i(h), y\rangle\Big\vert\\
&\leq\Vert x\Vert \Vert y\Vert \Vert\Lambda^{\ast}_i(h)-\Theta_i^*(h)\Vert\Big(\Vert\Lambda^{\ast}_i(h)\Vert+\Vert\Theta_i^*(h)\Vert\Big)\\
&\leq \lambda(\sqrt{B}+\sqrt{B'})\Vert x\Vert \Vert y\Vert \Vert h\Vert^4,
\end{align*}
therefore, we get
\begin{align*}
R_{h,\varepsilon}&(x,y)-R'_{h,\varepsilon}(x,y)\\
&\leq\lambda(\sqrt{B}+\sqrt{B'})\Vert x\Vert \Vert y\Vert  \frac{1}{\Vert h\Vert^2}\sum_{i\in\Bbb I}\Big\vert \vert\langle\Lambda^{\ast}_i(h),x\rangle\vert \vert\langle\Lambda^{\ast}_i(h), y\rangle\vert+\vert\langle\Theta^{\ast}_i(h),x\rangle\vert \vert\langle\Theta^{\ast}_i(h), y\rangle\vert\Big\vert\\
&\leq\lambda(\sqrt{B}+\sqrt{B'})\Vert x\Vert \Vert y\Vert  \frac{1}{\Vert h\Vert^2}
\Bigg\lbrace\Big(\sum_{i\in\Bbb I}\vert\langle\Lambda^{\ast}_i(h),x\rangle\vert^2\Big)^{1/2}\Big(\sum_{i\in\Bbb I}\vert\langle\Lambda^{\ast}_i(h),y\rangle\vert^2\Big)^{1/2}\\
&\qquad +\Big(\sum_{i\in\Bbb I}\vert\langle\Theta^{\ast}_i(h),x\rangle\vert^2\Big)^{1/2}\Big(\sum_{i\in\Bbb I}\vert\langle\Theta^{\ast}_i(h),y\rangle\vert^2\Big)^{1/2}\Bigg\rbrace
\end{align*}
\begin{align*}
&=\lambda(\sqrt{B}+\sqrt{B'})\Vert x\Vert \Vert y\Vert  \frac{1}{\Vert h\Vert^2}
\Bigg\lbrace\Big(\sum_{i\in\Bbb I}\vert\langle h,\Lambda_i x\rangle\vert^2\Big)^{1/2}\Big(\sum_{i\in\Bbb I}\vert\langle h,\Lambda_i y\rangle\vert^2\Big)^{1/2}\\
&\qquad +\Big(\sum_{i\in\Bbb I}\vert\langle h,\Theta_i x\rangle\vert^2\Big)^{1/2}\Big(\sum_{i\in\Bbb I}\vert\langle h,\Theta_i y\rangle\vert^2\Big)^{1/2}\Bigg\rbrace\\
&\leq\lambda(\sqrt{B}+\sqrt{B'})\Vert x\Vert \Vert y\Vert
\Bigg\lbrace\Big(\sum_{i\in\Bbb I}\Vert\Lambda_i x\Vert^2\Big)^{1/2}\Big(\sum_{i\in\Bbb I}\Vert\Lambda_i y\Vert^2\Big)^{1/2}+\Big(\sum_{i\in\Bbb I}\Vert\Theta_i x\Vert^2\Big)^{1/2}\Big(\sum_{i\in\Bbb I}\Vert\Theta_i y\Vert^2\Big)^{1/2}\Bigg\rbrace\\
&\leq\lambda(\sqrt{B}+\sqrt{B'})(B+B')\Vert x\Vert^2 \Vert y\Vert^2.
\end{align*}
Thus, we conclude that
\begin{align*}
R'_{h,\varepsilon}(x,y)&\geq R_{h,\varepsilon}(x,y)-\lambda(\sqrt{B}+\sqrt{B'})(B+B')\Vert x\Vert^2 \Vert y\Vert^2\\
&\geq\Big(C_0-\lambda(\sqrt{B}+\sqrt{B'})(B+B')\Big)\Vert x\Vert^2 \Vert y\Vert^2,
\end{align*}
and by $(3)\Rightarrow(1)$ Theorem \ref{th6}, the proof is completed.
\end{proof}
%%%%%%%%%%%%%%%%%%%%%%%%%%%

\end{document}